\documentclass[a4paper,10pt,reqno]{amsart}

\textwidth16cm \textheight21.1cm \oddsidemargin-0.05cm
\evensidemargin-0.05cm

\usepackage[utf8]{inputenc}
\usepackage[T1]{fontenc}
\usepackage{amsthm}
\usepackage{amsmath}
\usepackage{amssymb}
\usepackage[inline]{enumitem}
\usepackage{comment}
\PassOptionsToPackage{hyphens}{url}\usepackage{hyperref}
\usepackage{fancyhdr}
\usepackage{mathrsfs}
\usepackage{stmaryrd}
\usepackage[normalem]{ulem}
\usepackage{xcolor}

\setlist[description]{%
  itemsep=0.05cm,               
  font={\normalfont\textsc}, 
 leftmargin=\parindent,
 labelindent=\parindent
}

\theoremstyle{definition}

\newtheorem{theorem}{Theorem}[section]
\newtheorem{lemma}[theorem]{Lemma}

\newtheorem{corollary}[theorem]{Corollary}

\theoremstyle{definition}

\newtheorem{remark}[theorem]{Remark}

\newtheorem{examples}[theorem]{Examples}

\pagestyle{fancy}
\fancyhf{}
\fancyhead[CO,CE]{\textsc{{A conjecture by Bienvenu and Geroldinger on power monoids}}}
\fancyhead[RO,LE]{\thepage}

\setlength{\headheight}{12pt}
\definecolor{blue-url}{RGB}{0,0,100}
\definecolor{red-url}{RGB}{100,0,0}
\definecolor{green-url}{RGB}{0,100,0}
\definecolor{light-yellow}{RGB}{255,255,128}
\definecolor{light-blue}{RGB}{193,255,255}
\definecolor{light-red}{RGB}{239,83,80}

\hypersetup{
	pdftitle={Power monoids},
	pdfauthor={John Doe},
	pdfmenubar=false,
	pdffitwindow=true,
	pdfstartview=FitH,
	colorlinks=true,
	linkcolor=blue-url,
	citecolor=green-url,
	urlcolor=red-url
}

\renewcommand{\emptyset}{\varnothing}
\renewcommand{\setminus}{\smallsetminus}
\renewcommand{\,}{\kern 0.1em}

\providecommand\llb{\llbracket}
\providecommand\rrb{\rrbracket}

\newcommand{\evid}[1]{\textsf{#1}}
\newcommand{\fin}{\mathrm{fin}}

{\newline\vspace{\abovedisplayskip}\hbox to \textwidth\bgroup\hss$\displaystyle}
{$\hss\egroup\vspace{\belowdisplayskip}}

\makeatletter
\DeclareFontFamily{OMX}{MnSymbolE}{}
\DeclareSymbolFont{MnLargeSymbols}{OMX}{MnSymbolE}{m}{n}
\SetSymbolFont{MnLargeSymbols}{bold}{OMX}{MnSymbolE}{b}{n}
\DeclareFontShape{OMX}{MnSymbolE}{m}{n}{
	<-6>  MnSymbolE5
	<6-7>  MnSymbolE6
	<7-8>  MnSymbolE7
	<8-9>  MnSymbolE8
	<9-10> MnSymbolE9
	<10-12> MnSymbolE10
	<12->   MnSymbolE12
}{}
\DeclareFontShape{OMX}{MnSymbolE}{b}{n}{
	<-6>  MnSymbolE-Bold5
	<6-7>  MnSymbolE-Bold6
	<7-8>  MnSymbolE-Bold7
	<8-9>  MnSymbolE-Bold8
	<9-10> MnSymbolE-Bold9
	<10-12> MnSymbolE-Bold10
	<12->   MnSymbolE-Bold12
}{}

\let\llangle\@undefined
\let\rrangle\@undefined
\DeclareMathDelimiter{\llangle}{\mathopen}%
{MnLargeSymbols}{'164}{MnLargeSymbols}{'164}
\DeclareMathDelimiter{\rrangle}{\mathclose}%
{MnLargeSymbols}{'171}{MnLargeSymbols}{'171}
\makeatother

\hyphenation{
  ei-gen-value ei-gen-values ei-gen-prob-blem ei-gen-prob-blems ei-gen-space
  ei-gen-spaces e-ven-tu-al-ly in-e-qual-i-ty ab-so-lute-ly ex-ten-sion lin-e-ar
  ses-qui-lin-e-ar con-cen-trat-ed in-jec-ti-ve e-quiv-a-lence sub-space
  ex-trac-tion ul-ti-mate-ly func-tions e-quiv-a-lent ho-mo-mor-phism
  sem-i-val-u-at-ed sem-i-val-u-a-tion val-u-at-ed val-u-a-tion
  ul-tra-sem-i-val-u-at-ed ul-tra-sem-i-val-u-a-tion sys-tem-at-i-cal-ly
  el-e-men-ta-ry ir-ra-tion-al per-mu-ta-tion sem-i-norm sem-i-norm-ed seminorm-ed
}
\begin{document}
\title{A conjecture by Bienvenu and Geroldinger on power monoids}
\author{Salvatore Tringali}
\address{(S.~Tringali) School of Mathematical Sciences, Hebei Normal University | Shijiazhuang, Hebei province, 050024 China}
\email{salvo.tringali@gmail.com}
%
\author{Weihao Yan}
\address{(W.~Yao) School of Mathematical Sciences, Hebei Normal University | Shijiazhuang, Hebei province, 050024 China}
\email{weihao.yan.hebnu@outlook.com}
%
\subjclass[2020]{Primary 11B13, 11B30, 20M13.}
%
%
%
\keywords{Numerical monoids, power monoids, Puiseux monoids, sumsets.}
%
%
\begin{abstract}
Let $S$ be a numerical monoid, i.e., a submonoid of the additive monoid $(\mathbb N, +)$ of non-neg\-a\-tive integers such that $\mathbb N \setminus S$ is finite. Endowed with the operation of set addition, the family of all finite subsets of $S$ containing $0$ is itself a monoid, which we denote by $\mathcal P_{{\rm fin}, 0}(S)$.

We show that, if $S_1$ and $S_2$ are numerical monoids and $\mathcal P_{{\rm fin}, 0}(S_1)$ is isomorphic to $\mathcal P_{{\rm fin}, 0}(S_2)$, then $S_1 = S_2$. (In fact, we establish a more general result, in which $S_1$ and $S_2$ are allowed to be subsets of the non-negative rational numbers that contain zero and are closed under addition.) This proves a conjecture of Bienvenu and Geroldinger.  
\end{abstract}
\maketitle
\thispagestyle{empty}

\section{Introduction}
\label{sec:intro}

Let $S$ be a semigroup. (We refer to Howie's monograph \cite{Ho95} for generalities on semigroups and monoids; and unless a statement to the contrary is made, we assume throughout that all semigroups are written multiplicatively). Equipped with the (binary) op\-er\-a\-tion of setwise multiplication defined by
$$
(X, Y) \mapsto \{xy \colon x \in X,\, y \in Y\},
$$
the non-empty subsets of $S$ do also form a semigroup, herein denoted by $\mathcal P(S)$ and called the \evid{large power semigroup} of $S$. 
Additionally, the family of all non-empty \emph{finite} subsets of $S$ is a sub\-semi\-group of $\mathcal P(S)$, herein denoted by $\mathcal P_\fin(S)$ and referred to as the \evid{finitary power semigroup} of $S$. 

The systematic investigation of these structures was initiated by Tamura in the late 1960s and continued by semigroup theorists and computer scientists through the 1980s--1990s. By that time, researchers were especially interested in properties of $S$ that
do or do not ascend to $\mathcal P(S)$, as well as in the study of varieties (namely, classes of finite semigroups that are closed under taking homomorphic images, subsemigroups, and finite direct products) generated by $\mathcal P(S)$ as $S$ ranges over a specified family of finite semigroups (see the surveys by Pin \cite{Pin1986, Pin1995} and Almeida \cite{Alm02} for additional background). A key question, known as the \evid{isomorphism problem for power semigroups} and arisen from work of Tamura and Shafer \cite{Tam-Sha1967}, was to prove or disprove that, if $\mathcal P(S)$ is semigroup-isomorphic to the large power semigroup of a semigroup $T$, then $S$ is semigroup-isomorphic to $T$. Answered (in the negative) by Mogiljanskaja \cite{Mog1973}, the question is still open for \emph{finite} semigroups (cf.~the first paragraph on p.~5 of \cite{Ham-Nor2009}). 

Now suppose that $M$ is a monoid with identity $1_M$. Both $\mathcal P(M)$ and $\mathcal P_\fin(M)$ are then monoids too, their identity being the singleton $\{1_M\}$. Accordingly, we call $\mathcal P_\fin(M)$ the \evid{finitary power monoid} (for short, FPM) of $M$. Moreover, the family of all finite subsets of $M$ containing $1_M$ is a submonoid of $\mathcal P_\fin(M)$, herein denoted by $\mathcal P_{\fin,1}(M)$ and named the \evid{reduced FPM} of $M$. Introduced by Fan and Tringali in \cite{Fa-Tr18} (see \cite{Mar-Pin1973} for the analogous notion where finite subsets are replaced with infinite ones) and further investigated in \cite{An-Tr18}, \cite[Sect.~4.2]{Tr20(c)}, and \cite{Bie-Ger-22}, reduced FPMs are the most basic objects in a complex hierarchy of ``highly non-cancellative'' monoids, generically dubbed as \textsf{power monoids}. 

Power monoids provide a natural algebraic framework for a number of problems of great importance in additive combinatorics and related fields, one notable example being S\'ark\"ozy's conjecture \cite[Conjecture 1.6]{Sar2002} on the ``additive irreducibility'' of the set of quadratic residues modulo $p$ for all but finitely many primes $p \in \mathbb N^+$ (the positive integers). 
Furthermore, the arithmetic of power monoids --- with emphasis on ques\-tions concerned with the possibility or impossibility of factoring certain sets as a product of other sets that, in a sense, cannot be ``broken down into smaller pieces'' --- is a rich topic in  itself and has been pivotal in the ongoing development of a ``unifying theory of factorization'' \cite{Cos-Tri2023, Tr20(c)} that extends beyond the (somewhat narrow) boundaries of the classical theory \cite{Ge-HK06}.

The present article adds to this line of research by resolving a conjecture put forth by Bienvenu and Geroldinger \cite{Bie-Ger-22} that shares a similar spirit with Tamura and Shafer's isomorphism problem for power semigroups. Our starting point is the following:

\begin{remark}\label{rem:iso-implies-ISO}
Assume $f \colon H \to K$ is a monoid homomorphism and let $F$ be the function 
$$
\mathcal P_{\fin,1}(H) \to \mathcal P_{\fin,1}(K) \colon X \mapsto f(X), 
$$
where $f(X) := \{f(x) \colon x \in X\} \subseteq K$ is the (direct) image of $X$ under $f$. Note that $F$ is a well-defined func\-tion, because $f$ sends the identity $1_H$ of $H$ to the identity $1_K$ of $K$ and hence $f(X)$ is a finite subset of $K$ containing $1_K$ for each $X \in \mathcal P_{\fin,1}(H)$. In addition, we have $F(\{1_H\}) = \{1_K\}$, namely, $F$ maps the identity of $\mathcal P_{\fin,1}(H)$ to the identity of $\mathcal P_{\fin,1}(K)$; and it is immediate that, for every $X, Y \in \mathcal P_{\fin,1}(H)$,
\begin{equation*}
F(XY) = \{f(xy) \colon x \in X,\, y \in Y\} = \{f(x)f(y) \colon x \in X,\, y \in Y\} = F(X) F(Y).
\end{equation*}
All in all, this yields that $F$ is a monoid homomorphism $\mathcal P_{\fin,1}(H) \to \mathcal P_{\fin,1}(K)$.

Now suppose that $f$ is bijective. If $X, Y \in \mathcal P_{\fin,1}(H)$ and $X \ne Y$, then there is at least one element $z \in H$ such that either $z \in X \setminus Y$ or $z \in Y \setminus X$. In either case, we gather from the injectivity of $f$ that $F(X) \ne F(Y)$, which ultimately shows that $F$ itself is injective. Moreover, if $Y \in \mathcal P_{\fin,1}(K)$, then $X := \allowbreak f^{-1}(Y)$ is a finite subset of $H$ with $1_H \in X$ and $F(X) = Y$; that is, $F$ is surjective. So, $F$ is a monoid isomorphism from $\mathcal P_{\fin,1}(H) $ to $\mathcal P_{\fin,1}(K)$.
\end{remark}

The examples below prove that, in general, the conclusions drawn in Remark \ref{rem:iso-implies-ISO} cannot be reversed: The reduced FPMs of two monoids $H$ and $K$ can be monoid-isomorphic even if $H$ and $K$ are not.

\begin{examples}
\label{exa:the-unrestricted-conjecture-is-false}
\begin{enumerate*}[label=\textup{(\arabic{*})}, mode=unboxed]
\item\label{exa:(1)}
Let $H$ be a $2$-element monoid. 
Up to isomorphism, $H$ is then either the additive group of integers modulo $2$, or the idempotent submonoid $E = \{0, 1\}$ of the multiplicative monoid of the ring of integers. 
In both cases, $\mathcal P_{\fin,1}(H)$ is an idempotent $2$-element monoid (whose elements are the sets $\{1_H\}$ and $H$), and hence it is monoid-isomorphic to $E$. 
\end{enumerate*}

\vskip 0.05cm

\begin{enumerate*}[label=\textup{(\arabic{*})}, mode=unboxed, resume]
\item\label{exa:(2)} Let $M$ be a \evid{breakable monoid}, meaning that $xy \in \{x, y\}$ for all $x, y \in M$ (see, e.g., Sect.~27 in \cite{Red1967}). It is then easily found that $\mathcal P_{\fin,1}(M)$ is the monoid obtained by endowing the family of all finite subsets of $M$ containing the identity $1_M$ with the operation $(X, Y) \mapsto X \cup Y$. Indeed, we have
$$
X \cup Y = X1_M \cup 1_M Y \subseteq XY = \{xy \colon x \in X, \,y \in Y\} \subseteq X \cup Y,
\qquad \text{for all } X, Y \in \mathcal P_{\fin,1}(M).
$$
\indent{}Now let $H$ be a \evid{unitization of the left zero semigroup} on a set $V$, i.e., define an (associative) multiplication on $V$ by taking $xy := x$ for all $x, y \in V$, add a new element $e$ to $V$, and extend the previous operation from $V$ to $V \cup \{e\}$ by taking $xe = ex := x$ for every $x \in V \cup \{e\}$. Next, let $H^{\rm op}$ be the \evid{opposite monoid} of $H$, so that, in $H^{\rm op}$, we have $xy = y$ for all $x, y \in V$. Of course, $H$ and $H^{\rm op}$ are both breakable monoids (with identity $e$), which implies from the above that $\mathcal P_{\fin,1}(H) = \mathcal P_{\fin,1}(H^{\rm op})$. Yet, $H$ and $H^{\rm op}$ are non-isomorphic monoids unless $|V| \le 1$. \\

\indent{}In fact, assume $|V| \ge 2$ and suppose for a contradiction that there is a monoid isomorphism $f \colon H \to \allowbreak H^{\rm op}$. There then exist $u, v \in V$ with $u \ne v$ such that $uv = v$ in $H^{\rm op}$. Since $f(e) = e$ and $f$ is a bijection, we can thus find $x, y \in V$ with $f(x) = u$ and $f(y) = v$. This is however impossible, since $f$ being a monoid ho\-mo\-mor\-phism $H \to H^{\rm op}$ and considering that $xy = x$ in $H$ yield $v = uv = f(x) f(y) = f(xy) = f(x) = u$.
\end{enumerate*}

\vskip 0.05cm

\begin{enumerate*}[label=\textup{(\arabic{*})}, mode=unboxed, resume]
\item\label{exa:(3)} We get from items \ref{exa:(1)}  and \ref{exa:(2)} that, for any cardinal number $\kappa \ge 2$ (no matter if finite or not), there exist non-isomorphic monoids $H$ and $K$ with $|H| = |K| = \kappa$ such that $\mathcal P_{\fin,1}(H)$ and $ \mathcal P_{\fin,1}(K)$ are monoid-isomorphic. It is open whether anything similar is possible when $H$ and $K$ are \emph{both} cancellative.
\end{enumerate*}
\end{examples}

We are thus led to the question: Are there any \emph{interesting} classes $\mathscr H$ of monoids with the property that $\mathcal P_{\fin,1}(H)$ is (monoid-)isomorphic to $\mathcal P_{\fin,1}(K)$, for some $H, K \in \mathscr H$, if and only if $H$ is i\-so\-mor\-phic to $K$? It has been conjectured by Bienvenu and Geroldinger that the answer is positive when $\mathscr H$ is the class of numerical monoids, where we recall that a \evid{numerical monoid} is a submonoid $S$ of the additive monoid $(\mathbb N, +)$ of the non-negative integers such that $\mathbb N \setminus S$ is a finite set.

More precisely, Conjecture 4.7 in \cite{Bie-Ger-22} states that the reduced FPM of a numerical monoid $S_1$ is isomorphic to the reduced FPM of a numerical monoid $S_2$ if and only if $S_1 = S_2$. (It is folklore that two numerical monoids are isomorphic if and only if they are equal, see \cite[Theorem 3]{Hig-969}.) Our main result (Theorem \ref{thm:Bienvenu-Geroldinger-conjecture}) proves a stronger version of the conjecture, in which numerical monoids are replaced by (\evid{rational}) \evid{Puiseux monoids}, that is, subsets of the non-negative rational numbers that contain zero and are closed under addition \cite{Go17, Ge-Go-Tr21}. 
The corresponding question for the FPMs of numerical monoids was already settled by Bienvenu and Geroldinger themselves in \cite[Theorem 3.2(3)]{Bie-Ger-22}.

\section{The proof}

For all $a, b \in \mathbb Z$ and $\emptyset \ne A \subseteq \mathbb Z$, we take $\llb a, b \rrb := \{x \in \mathbb Z \colon a \le x \le b\}$ to be the \textsf{discrete interval} from $a$ to $b$ and $\gcd A$ to be the \evid{greatest common divisor} of $A$, i.e., the largest non-negative divisor of each $a \in A$ (here the term ``largest'' refers to the divisibility preorder on the multiplicative monoid of the integers and not to the usual ordering of $\mathbb Z$). Observe that $\gcd \{0\} = 0$. 

We denote by $\mathcal P_{\fin,0}(S)$ the reduced FPM of a Puiseux monoid $S$. In contrast to what is done in the introduction with the reduced FPM of an arbitrary monoid $H$, we write $\mathcal P_{\fin,0}(S)$ additively. In particular, given an integer $k \ge 0$ and a positive integer $n$, we let $kX$ be the \evid{$k$-fold sum} and $X/n := \{x/n \colon x \in X\}$ be the \evid{$1/n$-dilate} of a set $X \in \mathcal P_{\fin,0}(\mathbb Q_{\ge 0})$, where $\mathbb Q_{\ge 0}$ is the non-negative cone of the (ordered) field of rational numbers. Further ter\-mi\-nol\-o\-gy and notations, if not explained, are standard or should be clear from the context. Most notably, all morphisms mentioned in this section are \emph{monoid} homomorphisms.

A key ingredient in our proof of the Bienvenu--Geroldinger conjecture is a clas\-si\-cal result of Nathanson \cite{Nat78}, often referred to as the Fundamental Theorem of Additive Combinatorics:

\begin{theorem}[Nathanson's theorem]\label{thm:FTAC}
Let $A$ be a finite subset of $\mathbb N$ with $0 \in A$ and $\gcd A = 1$. There then exist $b, c \in \mathbb N$, $B \subseteq \allowbreak \llb 0, b-2 \rrb$, and $C \subseteq \llb 0, c-2 \rrb$ such that $kA = B \cup \allowbreak \llb b, \allowbreak ka - c \, \rrb \cup \allowbreak (ka - C)$ for every integer $k \ge a^2 n$, where $a := \allowbreak \max A$ and $n := |A| - 1$.
\end{theorem}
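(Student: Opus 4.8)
The plan is to recast everything in terms of the numerical monoid $S := \langle A \rangle$ generated by $A$, which is a cofinite submonoid of $(\mathbb N, +)$ precisely because $\gcd A = 1$. Writing $\ell(m)$ for the least number of \emph{nonzero} elements of $A$ (repetitions allowed) that sum to a given $m \in S$ (so $\ell(0) = 0$), the hypothesis $0 \in A$ makes the sumsets nested, $kA \subseteq (k+1)A$, and yields the clean description
\begin{equation*}
kA = \{m \in S : \ell(m) \le k\}, \qquad \max kA = ka, \qquad \ell(ka) = k.
\end{equation*}
Let $F$ be the Frobenius number of $S$ (the largest element of $\mathbb N \setminus S$). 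First I would pin down the lower end of $kA$: setting $b := F + 1$ and $B := S \cap \llb 0, F \rrb$, the fact that $F \notin S$ gives $B \subseteq \llb 0, b - 2 \rrb$, and since each $m \in B$ has $\ell(m) \le k$ once $k$ is large, the initial block $kA \cap \llb 0, b-1 \rrb$ stabilizes to $B$.

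For the upper end I would exploit the reflection $x \mapsto ka - x$. Putting $A' := a - A = \{a - x : x \in A\}$, one checks that $A'$ again lies in $\llb 0, a \rrb$, contains $0$ and $a$, satisfies $\gcd A' = 1$ and $|A'| = n + 1$, and that $x \in kA$ if and only if $ka - x \in kA'$; hence $kA = ka - kA'$. Applying the lower-end analysis to $A'$ then produces $c := F' + 1$ and $C := S' \cap \llb 0, F' \rrb \subseteq \llb 0, c - 2 \rrb$ (where $S' := \langle A' \rangle$ has Frobenius number $F'$) such that the top block $kA \cap \llb ka - c + 1, ka \rrb$ stabilizes to $ka - C$.

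The heart of the argument, and the step I expect to be the main obstacle, is showing that the middle block $\llb b, ka - c \rrb$ is \emph{completely filled}, i.e. that $\ell(m) \le k$ for every $m \in S$ with $m \le ka - c$ (membership in $S$ being automatic once $m \ge b > F$). The key is a length estimate extracted from a minimal representation $m = \sum_{i} g_i$: calling a summand \emph{small} if it is $< a$, a pigeonhole argument on the partial sums of the small summands modulo $a$ shows that no nonempty sub-collection of them is $\equiv 0 \pmod a$ — otherwise it could be traded for strictly fewer copies of $a$, contradicting minimality — so there are at most $a - 1$ small summands. Consequently
\begin{equation*}
a\,\ell(m) - m = \sum_{g_i < a} (a - g_i) \le (a-1)^2 \quad\text{and}\quad a\,\ell(m) - m \in S',
\end{equation*}
the membership coming from $a - g_i \in A'$. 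The inequality already forces $\ell(m) \le k$ whenever $m \le ka - (a-1)^2$; to reach the sharper endpoint $ka - c$ I would apply the same estimate to $A'$ and the reflected point $m' := ka - m$ (which satisfies $m' > F'$, so $m' \in S'$), using that $a\,\ell'(m') - m' \le (a-1)^2$ is small while $m = ka - m'$ is large. A short case distinction then delivers $\ell(m) \le k$ on the whole range $m \le ka - c$.

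Finally I would assemble the three blocks and verify the uniform threshold. Each stabilization kicks in once $k$ exceeds an explicit quantity built from $\max\{\ell(m) : m \le F\}$, its analogue for $S'$, and $(b + c)/a$. Here the same pigeonhole lemma, applied to the Apéry set of $S$ modulo $a$, bounds every Apéry element by $(a-1)^2$ and hence gives $F, F' < a^2$; feeding this back through the length estimate shows all of the above thresholds are $O(a)$, comfortably below $a^2 n$ for $n = |A| - 1 \ge 1$. The delicate part throughout is the middle block: the combinatorial input (the cap of $a-1$ on the number of small summands) is clean, but pushing the filled interval out to exactly $ka - c$ via the reflection, and confirming that the single constant $a^2 n$ simultaneously clears every stabilization threshold, is where the care is needed.
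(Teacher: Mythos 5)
The paper does not actually prove this statement: Theorem~\ref{thm:FTAC} is imported as a classical black box with a citation to Nathanson's 1972 note, and only its consequence (Lemma~\ref{lem:FTAC-decomposition}) is derived in the text. So there is no in-paper argument to compare yours against; judged on its own terms, your proof is correct and complete in all essentials. Your route --- describing $kA$ as $\{m \in \langle A\rangle : \ell(m) \le k\}$, stabilizing the bottom end via the Frobenius number of $S = \langle A\rangle$, stabilizing the top end via the reflection $kA = ka - k(a-A)$ and the Frobenius number of $S' = \langle a - A\rangle$, and filling the middle via the cap of $a-1$ on the number of summands smaller than $a$ in a minimal representation --- is essentially the standard numerical-semigroup proof of Nathanson's theorem (close in spirit to the original and to the Granville--Walker and Lev refinements cited in the paper). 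The individual steps all check out: the pigeonhole on consecutive partial sums correctly bounds the small summands by $a-1$ (using $a \in A$ to trade a zero-sum subcollection for strictly fewer copies of $a$); this yields $a\,\ell(m) \le m + (a-1)^2$ for all $m \in S$; the same bound applied to the Ap\'ery set of $S$ with respect to $a$ gives $F \le (a-1)^2 - a < a^2$, and likewise for $F'$; and the resulting stabilization thresholds are all at most roughly $2a$, hence below $a^2 n$ once $a \ge 2$. Two points you should make explicit in a write-up: (i) the case $a = 1$, i.e.\ $A = \{0,1\}$, must be split off, since there $2a \le a^2 n$ fails but the theorem is trivial ($kA = \llb 0, k \rrb$ with $B = C = \emptyset$); (ii) in the middle block, the range $ka - (a-1)^2 < m \le ka - c$ is handled not by bounding $\ell(m)$ directly but by showing that $m' := ka - m \in \llb c, (a-1)^2 \rrb$ lies in $kA'$, which needs $\ell'(m') \le k$, i.e.\ $k \ge 2(a-1)^2/a$ --- again comfortably covered by $k \ge a^2 n$. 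With those details spelled out, the argument stands.
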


The bound $a^2 n$ in Theorem \ref{thm:FTAC} has been recently improved to $a-n+1$ by Granville and Walker \cite[Theorem 1]{Gra-Wal21}, and their result was further refined by Lev in \cite[Theorem 3]{Lev-22} (though nothing of this will be needed in the proofs below).
Also note that, according to our definitions, the condition $\gcd A = 1$ in Theorem \ref{thm:FTAC} implies that the set $A$ has at least two elements --- and it holds that $|A| = 2$ if and only if $A = \{0, 1\}$, in which case $kA = \llb 0, k \rrb$ for each $k \in \mathbb N$ (and hence the theorem is trivial).

\begin{lemma}\label{lem:FTAC-decomposition}
If $A \in \mathcal P_{\fin,0}(\mathbb Q_{\ge 0})$, then $(k+1)A = kA + \{0, \max A\}$ for all large $k \in \mathbb N$.
\end{lemma}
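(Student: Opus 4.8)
The plan is to reduce to the integer, primitive case and then simply read the decompositions of both $kA$ and $(k+1)A$ off Nathanson's theorem. Write $a := \max A$. One inclusion is free for \emph{every} $k$: since $0, a \in A$, both $kA = kA + 0$ and $kA + a$ are contained in $kA + A = (k+1)A$, so $kA + \{0, \max A\} = kA \cup (kA + a) \subseteq (k+1)A$. The entire content of the lemma is therefore the reverse inclusion $(k+1)A \subseteq kA \cup (kA + a)$ for all large $k$.

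First I would dispose of the degenerate case $A = \{0\}$, where all three expressions collapse to $\{0\}$, and then normalize. The asserted identity is invariant under replacing $A$ by $\lambda A$ for any $\lambda \in \mathbb Q_{>0}$, because $k(\lambda A) = \lambda(kA)$, $\max(\lambda A) = \lambda \max A$, and dilation commutes with set addition; hence $(k+1)(\lambda A) = k(\lambda A) + \{0, \max \lambda A\}$ holds if and only if $(k+1)A = kA + \{0, \max A\}$ does. Choosing $\lambda$ to clear all denominators of the elements of $A$ and then divide out their greatest common divisor, I may assume $A \subseteq \mathbb N$, $0 \in A$, and $\gcd A = 1$, which places us exactly under the hypotheses of Theorem \ref{thm:FTAC}, now with $|A| \ge 2$ and $a \ge 1$.

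Now I would invoke Theorem \ref{thm:FTAC}: there are \emph{fixed} $b, c \in \mathbb N$, $B \subseteq \llb 0, b-2 \rrb$, and $C \subseteq \llb 0, c-2 \rrb$ with $kA = B \cup \llb b, ka - c \rrb \cup (ka - C)$ for every $k \ge a^2 n$, where $n := |A| - 1$. The decisive point is that $b, c, B, C$ do not depend on $k$, so the same formula with $k$ replaced by $k+1$ describes $(k+1)A = B \cup \llb b, (k+1)a - c \rrb \cup ((k+1)a - C)$. I then match its three blocks against $kA \cup (kA + a)$: the bottom block $B$ already lies in $kA$; the top block $(k+1)a - C = (ka - C) + a$ lies in $kA + a$; and the middle block $\llb b, (k+1)a - c \rrb$ I split as $\llb b, ka - c \rrb \subseteq kA$ together with its translate $\llb b + a, (k+1)a - c \rrb = \llb b, ka - c \rrb + a \subseteq kA + a$. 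These two intervals cover $\llb b, (k+1)a - c \rrb$ precisely when they overlap or abut, i.e. when $b + a \le (ka - c) + 1$, which holds once $k$ is large. Taking $k$ past $\max\{a^2 n,\, \lceil (a + b + c - 1)/a \rceil\}$ therefore yields $(k+1)A \subseteq kA \cup (kA + a)$, and combining with the free inclusion gives the desired equality.

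I expect the main (and only modest) obstacle to be bookkeeping rather than genuine difficulty: one must confirm that the constant tails of Nathanson's decomposition are truly identical for $k$ and $k+1$, which is exactly the uniformity baked into the statement of Theorem \ref{thm:FTAC}, and then verify the elementary interval inequality that forces the two middle blocks to fuse. Because the reflected top part $ka - C$ and the stable bottom part $B$ transfer verbatim between consecutive sumsets, no deeper phenomenon intervenes.
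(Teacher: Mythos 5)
Your proposal is correct and follows essentially the same route as the paper: reduce to the case $A \subseteq \mathbb N$ with $\gcd A = 1$ by dilation, apply Nathanson's theorem to both $kA$ and $(k+1)A$ with the same constants $b, c, B, C$, and check that the middle intervals fuse once $k$ exceeds a threshold of order $(b+c)/a$. The only (cosmetic) difference is that you match the three blocks of the decomposition wholesale, whereas the paper chases an arbitrary element of $(k+1)A$ through the same case distinction.
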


\begin{proof}
Since $A$ is a non-empty finite subset of $\mathbb Q_{\ge 0}$ containing $0$, there exist $d \in \allowbreak \mathbb N^+$ and $A' \in \allowbreak \mathcal P_{\fin,0}(\mathbb N)$ such that $A = A'/d$. It follows that $\max A = \max A'/d$ and $kA = kA'/d$ for all $k \in \mathbb N$, which makes it possible to simplify the remainder of the proof by taking $d = 1$ and hence $A \in \mathcal P_{\fin,0}(\mathbb N)$. 

Accordingly, set $a := \max A$ and $q := \gcd A$. We may suppose $\{0\} \subsetneq A$, or else $kA = \{0\}$ for every $k \in \allowbreak \mathbb N$ and the conclusion is obvious. Then, $q$ is a positive integer and there is no loss of generality in as\-sum\-ing (as we do) that $q = 1$, since letting $\hat{A} := A/q \subseteq \mathbb N$ and $\hat{a} := \max \hat{A}$ yields that (i) $\gcd \hat{A} = \allowbreak 1$ and (ii) $(k+1)A = kA + \{0, a\}$, for some $k \in \mathbb N$, if and only if $(k+1)\hat{A} = k\hat{A} + \{0, \hat{a}\}$.

Thus, we gather from Nathanson's theorem (Theorem \ref{thm:FTAC}) that there exist non-negative integers $b$, $c$, and $k_0$ and sets $B \subseteq \llb 0, b-2 \rrb$ and $C \subseteq \llb 0, c-2\rrb$ such that 
\begin{equation}\label{equ:FTAC-applied}
kA = B \cup \llb b, ka - c \, \rrb \cup (ka - C),
\qquad
\text{for each } k \in \mathbb N_{\ge k_0}.
\end{equation}
Now fix an integer $h \ge \max \{k_0,  1+(b+c)/a\}$. It is obvious that $
hA + \{0, a\} \subseteq hA + A = (h + 1)A$,
and we claim that also the converse inclusion is true. In fact, pick $x \in (h + 1)A$. 

Since $h \ge k_0$, Eq.~\eqref{equ:FTAC-applied} holds for both $k = h$ and $k = h+1$. So, either $x \in B \cup \llb b, ha - c - 1 \rrb$, and it is then evident that $x \in hA$; or $x-a \in \llb (h-1)a - c, ha - c \, \rrb \cup (ha - C) \subseteq hA$ and then $x \in hA + a$, where we are especially using that $h \ge 1+(b+c)/a$ and hence $(h-1)a - c \ge b$. In both cases, the conclusion is that $x \in hA + \{0, a\}$, which finishes the proof as $x$ is an arbitrary element in $(h+1)A$.
\end{proof}

\begin{lemma}
\label{lem:2-element-sets-to-2-elements-sets}
An isomorphism $\phi \colon \mathcal P_{\fin,0}(S_1) \to \mathcal P_{\fin,0}(S_2)$, where $S_1$ and $S_2$ are Puiseux monoids, sends a $2$-element set to a $2$-element set.
\end{lemma}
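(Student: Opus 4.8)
The plan is to leverage Lemma \ref{lem:FTAC-decomposition} not on $X$ itself but on its image, and then transport the resulting relation back through $\phi^{-1}$. The obvious difficulty is that $\mathcal P_{\fin,0}(S_2)$ is highly non-cancellative, so from an identity of the shape $kP + Q = kP + \{0,b\}$ one cannot simply cancel $kP$ to conclude $Q = \{0,b\}$. My way around this is to move the relation to the side where the set is already known to have two elements, where the arithmetic becomes rigid. Fix a $2$-element set $X = \{0,a\} \in \mathcal P_{\fin,0}(S_1)$ with $a > 0$, and set $P := \phi(X)$ and $b := \max P$. Since $\phi$ is an isomorphism it maps the identity $\{0\}$ to $\{0\}$ and is injective, so $P \ne \{0\}$ and hence $b > 0$; thus $\{0, b\}$ genuinely has two elements.

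Applying Lemma \ref{lem:FTAC-decomposition} to $P \in \mathcal P_{\fin,0}(S_2) \subseteq \mathcal P_{\fin,0}(\mathbb Q_{\ge 0})$ produces an integer $N$ such that $(k+1)P = kP + \{0, b\}$ for every $k \ge N$. I would then let $R := \phi^{-1}(\{0,b\}) \in \mathcal P_{\fin,0}(S_1)$ and apply the homomorphism $\phi^{-1}$ to each of these identities. Because $\phi^{-1}$ preserves sums and the $k$-fold-sum operation, this yields
\[
(k+1)\{0,a\} = k\{0,a\} + R \qquad \text{for every } k \ge N.
\]
The whole point is that the left-hand factor is now the very explicit arithmetic progression $k\{0,a\} = \{0, a, 2a, \dots, ka\}$, consisting solely of multiples of $a$.

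The rigidity step is then immediate. Fixing any $k \ge \max\{N,1\}$, and using $0 \in k\{0,a\}$, I get $R \subseteq k\{0,a\} + R = \{0, a, \dots, (k+1)a\}$, so every element of $R$ is a non-negative multiple of $a$; comparing largest elements on the two sides of the displayed identity forces $\max R = a$, whence $R \subseteq \{0, a\}$. Since $0 \in R$ and $a = \max R \in R$, this gives $R = \{0,a\} = X$, i.e.\ $\phi^{-1}(\{0,b\}) = X$, and therefore $\phi(X) = \{0, b\}$ is a $2$-element set, as desired. I expect the only genuinely load-bearing observation to be the transport-and-rigidity idea itself — namely that, although cancellation fails in general, the equation $k\{0,a\} + R = (k+1)\{0,a\}$ pins $R$ down completely because $k\{0,a\}$ lives inside the cyclic structure $a\mathbb N$; everything else (identity preservation, $b>0$, the max comparison) is routine.
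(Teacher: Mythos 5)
Your proof is correct and follows essentially the same route as the paper's: apply Lemma \ref{lem:FTAC-decomposition} to the image $\phi(\{0,a\})$ with $b := \max\phi(\{0,a\})$, pull the identity $(k+1)P = kP + \{0,b\}$ back through $\phi^{-1}$ to get $(k+1)\{0,a\} = k\{0,a\} + \phi^{-1}(\{0,b\})$, and then use the rigidity of the progression $(k+1)\{0,a\} = \{0, a, \dots, (k+1)a\}$ together with the max comparison to force $\phi^{-1}(\{0,b\}) = \{0,a\}$. The only difference is cosmetic (the paper phrases the final containment via ``$a$ is the least non-zero element of $(k+1)\{0,a\}$'' rather than via multiples of $a$), so there is nothing to add.
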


\begin{proof}
Fix a non-zero $a \in S_1$. We need to show that $B := \allowbreak \phi(\{0, \allowbreak a\}) = \allowbreak \{0, b\}$ for some (non-zero) $b \in S_2$. 
Indeed, set $b := \max B \in S_2$ and note that $b$ is non-zero, because $B \ne \phi(\{0\}) = \{0\}$ (by the injectivity of $\phi$). 
By Lemma \ref{lem:FTAC-decomposition}, there then exists an integer $k \ge 0$ such that 
$
(k+1) B = kB + \{0, b\}$.

Put $A := \phi^{-1}(\{0, b\})$, where $\phi^{-1}$ is the (functional) inverse of $\phi$. Since $\phi^{-1}$ is an isomorphism from $\mathcal P_{\fin,0}(S_2)$ to $\mathcal P_{\fin,0}(S_1)$ with $\phi^{-1}(B) = \{0, a\}$, we get from the above that
$$
(k+1)\{0, a\} = (k+1) \phi^{-1}(B) = k \phi^{-1}(B) + \phi^{-1}(\{0, b\}) = k \{0, a\} + A.
$$
It follows that $\{0\} \subsetneq A \subseteq (k+1) \{0, a\}$ and $\max A = (k+1) a - ka = a$. So, noticing that $a$ is the least non-zero element of $(k+1) \{0, a\}$, we find $A = \{0, \allowbreak a\}$ and hence $B = \allowbreak \phi(\{0, a\}) = \phi(A) = \{0, b\}$.
\end{proof}

\begin{lemma}\label{lem:max-is-homomorphism}
Let $\phi \colon \mathcal P_{\fin,0}(S_1) \to \mathcal P_{\fin,0}(S_2)$ be an isomorphism, where $S_1$ and $S_2$ are Puiseux monoids, and pick $a_1, a_2 \in S_1$. The following hold:
\begin{enumerate}[label=\textup{(\roman{*})}]
\item\label{cor:max-is-homomorphism(i)} There exist $b_1, b_2 \in S_2$ such that $\phi(\{0, a_1\}) = \{0, b_1\}$ and $\phi(\{0, a_2\}) = \{0, b_2\}$.
\item\label{cor:max-is-homomorphism(ii)} $\phi(\{0, a_1 + a_2\}) = \{0, b_1 + b_2\}$.
\end{enumerate}
\end{lemma}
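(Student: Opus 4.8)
For part \ref{cor:max-is-homomorphism(i)}, I would simply invoke Lemma \ref{lem:2-element-sets-to-2-elements-sets}. If $a_i = 0$ then $\{0, a_i\} = \{0\}$ is the identity of $\mathcal P_{\fin,0}(S_1)$, which $\phi$ fixes, so one takes $b_i = 0$; otherwise $\{0, a_i\}$ is a genuine $2$-element set and Lemma \ref{lem:2-element-sets-to-2-elements-sets} guarantees that its image is a $2$-element set, necessarily of the form $\{0, b_i\}$ with $b_i \ne 0$, because every member of $\mathcal P_{\fin,0}(S_2)$ contains $0$.

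For part \ref{cor:max-is-homomorphism(ii)}, the plan is to manufacture one algebraic identity, valid in both monoids and transported faithfully by $\phi$, that isolates the sum $a_1 + a_2$ (respectively $b_1 + b_2$) as the ``top'' produced by Lemma \ref{lem:FTAC-decomposition}. Concretely, set $A := \{0, a_1\} + \{0, a_2\} = \{0, a_1, a_2, a_1 + a_2\}$, so that $\max A = a_1 + a_2$. By Lemma \ref{lem:FTAC-decomposition} there is a large $k$ with $(k+1)A = kA + \{0, a_1 + a_2\}$. Writing $A' := \phi(A) = \{0, b_1\} + \{0, b_2\} = \{0, b_1, b_2, b_1 + b_2\}$ and applying $\phi$ (which commutes with $k$-fold sums, being a homomorphism), this identity pushes forward to $(k+1)A' = kA' + \{0, c\}$, where $\{0, c\} := \phi(\{0, a_1 + a_2\})$ is a $2$-element set by Lemma \ref{lem:2-element-sets-to-2-elements-sets} (or $\{0\}$ in the degenerate case $a_1 = a_2 = 0$). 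On the other hand $\max A' = b_1 + b_2$, so Lemma \ref{lem:FTAC-decomposition} applied \emph{directly} to $A'$ gives $(k+1)A' = kA' + \{0, b_1 + b_2\}$ for all large $k$.

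Choosing $k$ large enough that both identities hold at once, I obtain $kA' + \{0, c\} = kA' + \{0, b_1 + b_2\}$ inside $\mathcal P_{\fin,0}(S_2)$. Since $\mathcal P_{\fin,0}$ is highly non-cancellative, one cannot cancel $kA'$; the finishing move instead is to apply the map $\max$, which is a monoid homomorphism $\mathcal P_{\fin,0}(S_2) \to (S_2, +)$ because $\max(X + Y) = \max X + \max Y$ for finite subsets of $\mathbb Q_{\ge 0}$. Taking maxima of both sides yields $\max(kA') + c = \max(kA') + (b_1 + b_2)$, whence $c = b_1 + b_2$ and therefore $\phi(\{0, a_1 + a_2\}) = \{0, b_1 + b_2\}$.

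I expect the crux to be the observation underlying the second paragraph: recognizing that feeding the sumset $\{0, a_1\} + \{0, a_2\}$ into Lemma \ref{lem:FTAC-decomposition} extracts exactly $\{0, \max\} = \{0, a_1 + a_2\}$, and that the identical recipe applied to the transported set $A'$ extracts $\{0, b_1 + b_2\}$. Everything else --- carrying the eventual-stability identity across $\phi$ and reading off the top coordinate via $\max$ --- is routine, the only genuine subtlety being that non-cancellativity forces the use of $\max$ in place of a cancellation step.
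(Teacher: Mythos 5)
Your proof is correct and follows essentially the same route as the paper: form the sumset $\{0,a_1\}+\{0,a_2\}$, apply Lemma \ref{lem:FTAC-decomposition} to it, transport the resulting identity through $\phi$, and read off the answer with the $\max$ homomorphism. The only (harmless) difference is that you invoke Lemma \ref{lem:FTAC-decomposition} a second time on $\phi(A)$ and compare the two identities, whereas the paper takes $\max$ of the transported identity directly to get $c=(k+1)\max\phi(A)-k\max\phi(A)=\max\phi(A)=b_1+b_2$ in one step.
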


\begin{proof}
Define 
$
A := \{0, a_1\} + \{0, a_2\}$, $
B := \allowbreak \phi(A)$, and $
a_0 := a_1 + a_2 = \max A \in S_1$.
We have from Lemma \ref{lem:2-element-sets-to-2-elements-sets} that, for each $i \in \llb 0, 2 \rrb$, there is an element $b_i \in S_2$ such that 
$
\phi(\{0, \allowbreak a_i\}) = \{0, b_i\}$.
So, it only remains to show that $b_0 = b_1 + b_2$.
To this end, we know from Lemma \ref{lem:FTAC-decomposition} that $
(k+\allowbreak 1) A = \allowbreak kA + \{0, \allowbreak a_0\}$ for some $k \in \mathbb N$. Since $\phi(X+Y) = \phi(X) + \phi(Y)$ for all $X, Y \in \mathcal P_{\fin,0}(S_1)$, it is thus found that 
$$
B = \phi(A) = \phi(\{0, a_1\}) + \phi(\{0, a_2\}) = \{0, b_1\} + \{0, b_2\}
$$
and
$$
(k+1) B = (k+1) \phi(A) = \phi((k+1) A) = k \phi(A) + \phi(\{0, a_0\}) = kB + \{0, b_0\}.
$$
Consequently, we obtain that $b_0 = (k+1) \max B - k \max B = \max B = b_1 + b_2$, as wished.
\end{proof}

\begin{theorem}\label{thm:Bienvenu-Geroldinger-conjecture}
The reduced finitary power monoids $\mathcal P_{\fin,0}(S_1)$ and $\mathcal P_{\fin,0}(S_2)$ of two Puiseux monoids $S_1$ and $S_2$ are isomorphic if and only if $S_1$ and $S_2$ are.
\end{theorem}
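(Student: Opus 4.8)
The plan is to prove the two implications separately. For the easy (``only if'') direction, suppose $S_1$ and $S_2$ are isomorphic Puiseux monoids via some monoid isomorphism $f \colon S_1 \to S_2$. Then, exactly as in Remark \ref{rem:iso-implies-ISO} (transported to the additive notation used in this section), the induced map $X \mapsto f(X)$ is a monoid isomorphism $\mathcal P_{\fin,0}(S_1) \to \mathcal P_{\fin,0}(S_2)$; so nothing further is needed here.

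For the substantive (``if'') direction, assume $\phi \colon \mathcal P_{\fin,0}(S_1) \to \mathcal P_{\fin,0}(S_2)$ is an isomorphism. The idea is to recover an isomorphism $S_1 \to S_2$ from the way $\phi$ acts on the two-element sets $\{0, a\}$, which form a faithful (though \emph{not} multiplicatively closed) copy of $S_1$ inside $\mathcal P_{\fin,0}(S_1)$. By Lemma \ref{lem:2-element-sets-to-2-elements-sets}, for every nonzero $a \in S_1$ the image $\phi(\{0, a\})$ is again a two-element set, so one may define $\psi \colon S_1 \to S_2$ unambiguously by the rule $\phi(\{0, a\}) = \{0, \psi(a)\}$, that is, $\psi(a) := \max \phi(\{0, a\})$ (with $\psi(0) = 0$, since $\phi$ fixes the identity $\{0\}$).

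It then remains to check that $\psi$ is a monoid isomorphism. That $\psi$ is a homomorphism is precisely the content of Lemma \ref{lem:max-is-homomorphism}\ref{cor:max-is-homomorphism(ii)}: for $a_1, a_2 \in S_1$ one has $\phi(\{0, a_1 + a_2\}) = \{0, \psi(a_1) + \psi(a_2)\}$, whence $\psi(a_1 + a_2) = \psi(a_1) + \psi(a_2)$. Injectivity is immediate, because $\psi(a) = \psi(a')$ forces $\phi(\{0, a\}) = \phi(\{0, a'\})$ and hence $\{0, a\} = \{0, a'\}$ by the injectivity of $\phi$. For surjectivity I would run the very same construction on the inverse isomorphism $\phi^{-1}$, obtaining a homomorphism $\chi \colon S_2 \to S_1$ with $\phi^{-1}(\{0, b\}) = \{0, \chi(b)\}$; applying $\phi$ gives $\{0, \psi(\chi(b))\} = \{0, b\}$, so $\psi \circ \chi = \mathrm{id}_{S_2}$ and, symmetrically, $\chi \circ \psi = \mathrm{id}_{S_1}$. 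Thus $\psi$ is bijective with inverse $\chi$, and therefore $S_1$ and $S_2$ are isomorphic.

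I do not expect a genuine obstacle at the level of the theorem itself: the real difficulty---extracting an additive, structure-respecting map from an isomorphism of the highly non-cancellative monoids $\mathcal P_{\fin,0}(S_i)$---has already been absorbed into Lemmas \ref{lem:2-element-sets-to-2-elements-sets} and \ref{lem:max-is-homomorphism}, which in turn rest on the stabilization $(k+1)A = kA + \{0, \max A\}$ coming from Nathanson's theorem. The one point that deserves care is that $a \mapsto \{0, a\}$ is not a homomorphism (indeed $\{0, a_1\} + \{0, a_2\} = \{0, a_1, a_2, a_1 + a_2\}$ is typically a four-element set), so one cannot simply ``restrict $\phi$ to two-element sets''; the additivity of $\psi$ genuinely requires the max-compatibility supplied by Lemma \ref{lem:max-is-homomorphism} rather than a formal restriction argument.
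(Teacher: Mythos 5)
Your argument is correct and follows essentially the same route as the paper: both reduce the substantive direction to Lemmas \ref{lem:2-element-sets-to-2-elements-sets} and \ref{lem:max-is-homomorphism} by defining the map $a \mapsto \max\phi(\{0,a\})$, with bijectivity obtained from the fact that $\phi^{-1}$ is itself an isomorphism. The only (purely terminological) slip is that you have swapped the labels: the direction ``$S_1 \cong S_2$ implies the power monoids are isomorphic'' is the \emph{if} part (handled by Remark \ref{rem:iso-implies-ISO}), while the direction you call ``if'' is in fact the \emph{only if} part.
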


\begin{proof}
In view of Remark \ref{rem:iso-implies-ISO}, we may focus our attention on the ``only if'' part of the statement. 

Let $\phi$ be an isomorphism $\mathcal P_{\fin,0}(S_1) \to \mathcal P_{\fin,0}(S_2)$. By
Lemma \ref{lem:2-element-sets-to-2-elements-sets}, $\phi$ maps a $2$-element set $\{0, a\} \subseteq \allowbreak S_1$ to a $2$-element set $\{0, b\} \subseteq S_2$.
Conversely, any $2$-element set $\{0, b\} \subseteq S_2$ is the image under $\phi$ of a $2$-element set $\{0, a\} \subseteq S_1$, because the inverse $\phi^{-1}$ of $\phi$ is itself an isomorphism, with the result that, for each non-zero $b \in S_2$, there is a non-zero $a \in S_1$ with $\phi^{-1}( \{0, b\} ) = \{0, a\}$.

It follows that the function $\Phi \colon S_1 \to S_2 \colon a \mapsto \max \phi( \{0, a\} )$ is bijective; and on the other hand, we get from Lemma \ref{lem:max-is-homomorphism} that $\Phi$ is a homomorphism (from $S_1$ to $S_2$). So, we are done.
\end{proof}

\begin{corollary}
The reduced finitary power monoids of two numerical monoids $S_1$ and $S_2$ are isomorphic if and only if $S_1 = S_2$.
\end{corollary}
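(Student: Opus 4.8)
The plan is to obtain the corollary almost immediately from Theorem \ref{thm:Bienvenu-Geroldinger-conjecture}, reducing it to a short arithmetic rigidity statement. First I would note that every numerical monoid is in particular a Puiseux monoid, being a subset of $\mathbb Q_{\ge 0}$ that contains $0$ and is closed under addition. Theorem \ref{thm:Bienvenu-Geroldinger-conjecture} then applies directly and shows that $\mathcal P_{\fin,0}(S_1)$ and $\mathcal P_{\fin,0}(S_2)$ are isomorphic if and only if $S_1$ and $S_2$ are isomorphic as monoids. Since the implication $S_1 = S_2 \Rightarrow \mathcal P_{\fin,0}(S_1) = \mathcal P_{\fin,0}(S_2)$ is trivial, the whole content of the corollary collapses to the folklore fact (cited in the introduction as \cite[Theorem 3]{Hig-969}) that two numerical monoids are isomorphic as monoids if and only if they are equal.

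To prove this remaining fact I would exploit the rigidity of the group of differences. Let $f \colon S_1 \to S_2$ be a monoid isomorphism. For $i \in \{1,2\}$, the finiteness of $\mathbb N \setminus S_i$ forces $\gcd S_i = 1$ (otherwise every element of $S_i$ would be divisible by some fixed $d > 1$, leaving infinitely many integers outside $S_i$), so the subgroup of $\mathbb Z$ generated by $S_i$ is all of $\mathbb Z$; equivalently, the Grothendieck group of each $S_i$ is $\mathbb Z$. By the universal property of the group of differences, $f$ extends uniquely to a group isomorphism $\tilde f \colon \mathbb Z \to \mathbb Z$, and the only automorphisms of $\mathbb Z$ are $\pm\,\mathrm{id}$.

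It then remains only to rule out the orientation-reversing case. Since $S_1$ contains a positive integer $s$ (again because its complement in $\mathbb N$ is finite) and $f(s) \in S_2 \subseteq \mathbb N$, the possibility $\tilde f = -\mathrm{id}$ is excluded, as it would give $f(s) = -s < 0$. Hence $\tilde f = \mathrm{id}$, so $f$ fixes every element of $S_1$ and $S_2 = f(S_1) = S_1$. I do not anticipate any genuine obstacle in this argument: the single point that requires a moment's care is the exclusion of the sign-reversing automorphism, and this is immediate once one observes that both monoids sit inside $\mathbb N$ and contain a positive element.
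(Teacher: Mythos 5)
Your proof is correct and follows the same route as the paper: the corollary is deduced from Theorem \ref{thm:Bienvenu-Geroldinger-conjecture} together with the folklore fact that two numerical monoids are isomorphic as monoids only if they are equal. The sole difference is that the paper simply cites this fact (to Higgins \cite{Hig-969}), whereas you supply a correct self-contained proof of it via the group of differences (extending the isomorphism to an automorphism of $\mathbb Z$ and ruling out $-\mathrm{id}$ by positivity), which is a harmless and indeed welcome addition.
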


\begin{proof}
Straightforward from Theorem \ref{thm:Bienvenu-Geroldinger-conjecture} and the fact that two numerical monoids are isomorphic if and only if they are equal (see the comments after Examples \ref{exa:the-unrestricted-conjecture-is-false}).
\end{proof}

\section*{Acknowledgements}

The authors are thankful to Benjamin Steinberg (City University of New York, US) for pointing out Higgins' paper \cite{Hig-969} on MathOverflow (see \url{https://mathoverflow.net/questions/437667/}), and to the anonymous referees for their careful reading of the paper and many comments that have greatly helped to improve on the presentation.

\end{document}